\documentclass[a4paper,12pt]{article}
\usepackage[T1]{fontenc}
\usepackage{mathrsfs}
\usepackage{graphicx}
\usepackage{enumerate}
\usepackage{multicol}
\usepackage{color}
\usepackage{amsmath,amssymb}
\usepackage{amsthm}
\usepackage{textcomp}
\usepackage{mathabx}

\usepackage{hyperref}

\usepackage{times}
\usepackage[varg]{txfonts}

\usepackage[top=1in, bottom=1in, left=0.75in, right=0.75in]{geometry}

\theoremstyle{plain}
\newtheorem{thm}{Theorem}
\newtheorem{lem}{Lemma}
\newtheorem{cor}{Corollary}
\newtheorem{prop}{Proposition}

\theoremstyle{definition}
\newtheorem{dfn}{Definition}

\theoremstyle{remark}

\newcommand{\R}{\mathbb{R}}

\newcommand{\diff}[2]{\dfrac{\mathrm{d}^{#1}}{\mathrm{d}{#2}^{#1}}}
\newcommand{\supp}{\mathrm{supp}\,}

\title{A direct and algebraic characterization of  higher-order differential operators}
\author{Włodzimierz Fechner and Eszter Gselmann}

\begin{document}

\maketitle

\begin{abstract}

This paper presents an algebraic approach to characterizing higher-order differential operators. While the foundational Leibniz rule addresses first-order derivatives, its extension to higher orders typically involves identities relating multiple distinct operators. In contrast, we introduce a novel operator equation involving only a single $n$\textsuperscript{th}-order differential operator. We demonstrate that, under certain mild conditions, this equation serves to characterize such operators. Specifically, our results show that these higher-order differential operators can be identified as particular solutions to this single-operator identity. This approach provides a framework for understanding the algebraic structure of higher-order differential operators acting on function spaces.
\end{abstract}

\section{Introduction}

Let $\Omega \subset \mathbb{R}$ be a nonempty and open set and $k$ be a nonnegative integer. Let us consider the function space 
\[
 \mathscr{C}^{k}(\Omega)= 
 \left\{  f\colon \Omega \to \mathbb{R}\, \vert \, f \text{ is } k \text{ times continuously differentiable}\right\}. 
\]
For $k=0$, instead of $\mathscr{C}^{0}(\Omega)$ we simply write $\mathscr{C}(\Omega)$ for the linear spaces of all continuous functions $f\colon \Omega\to \mathbb{R}$. 

A fundamental property of the first-order derivative is the Leibniz rule, i.e. 
\[
\diff{}{x}(f\cdot g) = f \cdot \diff{}{x} g+ \diff{}{x}f \cdot g 
\qquad 
\left(f, g\in \mathscr{C}^{1}(\Omega)\right). 
\]

H.~König and V.~Milman \cite{KonMil11} investigated the extent to which the Leibniz rule characterizes the derivative. Their main result is presented in the theorem below (see also \cite[Theorem 3.1]{KonMil18}, \cite{GolSem96}).

\begin{thm}[König-Milman]
 Let $\Omega\subset \mathbb{R}$ be a nonempty and open set and $k$ be a nonnegative integer. Suppose that the operator $T\colon \mathscr{C}^k(\Omega)\to \mathscr{C}(\Omega)$ satisfies the Leibniz rule, i.e., 
 \begin{equation}\label{LR}
  T(f\cdot g)= f\cdot T(g)+T(f)\cdot g 
  \qquad 
  \left(f, g\in \mathscr{C}^k(\Omega)\right). 
 \end{equation}
Then there exist functions $c, d\in \mathscr{C}^k(\Omega)$ such that for all $f\in \mathscr{C}(\Omega)$ and $x\in \Omega$ 
\[
 T(f)(x)= c(x)\cdot f(x) \cdot \ln \left(\left|f(x)\right|\right)+ d(x) \cdot f'(x). 
\]
For $k=0$ we necessarily have $d=0$. 
Conversely, any such map $T$ satisfies \eqref{LR}. 
\end{thm}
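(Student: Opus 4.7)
The approach is to analyze $T$ pointwise. For each $x \in \Omega$, define the functional $T_x \colon \mathscr{C}^k(\Omega) \to \R$ by $T_x(f) := T(f)(x)$, so that \eqref{LR} becomes the point-Leibniz identity
\[
 T_x(fg) = f(x)\, T_x(g) + g(x)\, T_x(f),
 \qquad f,g \in \mathscr{C}^k(\Omega).
\]
Setting $f = g = 1$ (the constant function) first gives $T(1) \equiv 0$. The initial essential step is then to establish the \emph{locality} of $T$: if $f$ vanishes on an open neighborhood $U$ of $x$, then $T_x(f) = 0$. Choose $g \in \mathscr{C}^k(\Omega)$ with $g(x) = 1$ and $\supp g \subset U$; then $fg \equiv 0$, hence $0 = T_x(fg) = f(x) T_x(g) + g(x) T_x(f) = T_x(f)$.

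Next I would resolve the action on constant functions. Writing $\hat a$ for the constant function equal to $a \in \R$, set $\phi_x(a) := T_x(\hat a)$. The Leibniz rule forces $\phi_x(ab) = a\phi_x(b) + b\phi_x(a)$, and the substitution $\psi_x(a) = \phi_x(a)/a$ (for $a \neq 0$) linearizes this to the Cauchy logarithmic equation $\psi_x(ab) = \psi_x(a) + \psi_x(b)$. Combined with the regularity inherited from the continuity of $x \mapsto T(\hat a)(x)$ and mild regularity in $a$ (extracted via $\hat{a^2} = \hat a \cdot \hat a$ and iteration), this yields $\phi_x(a) = c(x)\, a \ln|a|$, where $c$ is read off as $c(x) = T(\hat{a_0})(x)/(a_0 \ln a_0)$ for any fixed $a_0 > 0$, $a_0 \neq 1$, and is automatically continuous.

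For $k = 0$ the proof concludes by combining locality with a multiplicative cut-off argument reducing $T_x(f)$ to the constant case at $x$, giving $T_x(f) = c(x) f(x) \ln|f(x)|$ (and $d \equiv 0$ since no first-derivative data is accessible). For $k \geq 1$, I would introduce the coordinate function $e(y) := y$, set $d(x) := T_x(e) - c(x)\, x \ln|x|$, and decompose a general $f$ near $x$ \emph{multiplicatively}: when $f(x) \neq 0$, write $f = \hat{f(x)} \cdot u$ with $u(x) = 1$, so that $T_x$ recovers $f'(x)/f(x)$ through the action on $e$; when $f(x) = 0$, factor $f(y) = (y - x) h(y)$ with $h \in \mathscr{C}^{k-1}(\Omega)$ and apply \eqref{LR} to the product. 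In each case, the remaining terms are killed by locality and the contributions collapse, via the formulas on constants and on $e$, to $c(x) f(x) \ln|f(x)| + d(x) f'(x)$.

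The main obstacle is the \emph{absence of additivity}: \eqref{LR} does not imply $T(f + g) = T(f) + T(g)$, so every step that would normally linearize $f$ must be recast as a multiplicative decomposition, leaning on locality and carefully chosen bump functions to quotient out behaviour away from $x$; upgrading the regularity of $c$ and $d$ from continuous to $\mathscr{C}^k$ requires further exploitation of these decompositions against smooth test families. The converse implication is a direct computation: on $\{fg \neq 0\}$ the product rule $(fg)' = f'g + fg'$ combined with $\ln|fg| = \ln|f| + \ln|g|$ verifies \eqref{LR}, and continuity extends the identity across the zero set.
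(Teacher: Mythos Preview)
The paper does not prove this theorem: it is quoted as the main result of K\"onig--Milman and serves only as motivation, so there is no in-paper proof to compare against. What the paper \emph{does} prove is the higher-order generalization (Theorem~\ref{thm_main}), and the method there is rather different from your sketch: instead of a direct multiplicative decomposition of $f$, the paper passes to the auxiliary operator $P(g)=D(\exp\circ g)/\exp\circ g$, which converts the multiplicative identity into an \emph{additive} one; a polynomial-type characterization (Lemma~\ref{lem_aichinger} and Corollary~\ref{cor_aichinger}) then pins down $P$, and continuity of $D(f)$ across the zero set of $f$ is used to eliminate the singular cross-terms coming from the Fa\`a di Bruno expansion of $(\ln|f|)^{(j)}$.

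On your sketch itself, two points deserve care. First, your locality step only shows that $T_x(f)$ depends on the \emph{germ} of $f$ at $x$; you still need the upgrade to dependence on the $k$-jet $(f(x),f'(x),\dots,f^{(k)}(x))$ before you can ``reduce to the constant case'' when $k=0$ or isolate $f'(x)$ when $k\ge 1$. K\"onig--Milman handle this by a separate argument (their Proposition~3.3, restated here as Proposition~\ref{prop_loc_pointwise}), and it is not a triviality in the absence of additivity. Second, your treatment of the zero case factors $f(y)=(y-x)h(y)$ with $h\in\mathscr{C}^{k-1}(\Omega)$, but then $h\notin\mathscr{C}^{k}(\Omega)$ in general, so $T(h)$ is not defined and \eqref{LR} cannot be applied to the product $(y-x)\cdot h$. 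Closing this case requires either an approximation step or the germ-to-jet reduction mentioned above; the exponential route taken in the paper sidesteps the issue entirely because $\exp\circ g$ never vanishes.
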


The standard generalization of the Leibniz rule to the $n^{th}$ derivative is given by the identity:
 \begin{equation}\label{Leibnizn}
  \diff{n}{x}(f\cdot g)= \sum_{k=0}^{n}\binom{n}{k}\diff{k}{x}f\cdot \diff{n-k}{x}g
  \qquad
  \left(f, g \in \mathscr{C}^{n}(\Omega)\right).
 \end{equation}
A notable feature of this identity is that it involves not only the $n^{th}$-order differential operator $\diff{n}{x}$, but also all lower-order operators $\diff{k}{x}$ for $k=0, 1, \ldots, n-1$.
Consequently, the corresponding abstract operator equation,
 \begin{equation}\label{operatorn}
  T_{n}(f\cdot g)= \sum_{k=0}^{n}\binom{n}{k}T_{k}(f)\cdot T_{n-k}(g)
  \qquad
  \left(f, g \in \mathscr{C}^{n}(\Omega)\right),
 \end{equation}
is also not an equation characterizing a single operator $T_n$. Instead, it establishes a relationship involving the entire sequence of operators $T_{k}$ for $k=0, 1, \ldots, n$.
This raises the question: Is it possible to characterize the $n^{th}$-order differential operator $\diff{n}{x}$ using an operator equation that involves only this single operator?
To explore this, consider a general $n^{th}$-order linear differential operator $\mathbf{D}_n$ (for $n\in \mathbb{N}$) with continuous coefficients $c_{i}\in \mathscr{C}(\Omega)\, (i=1, \ldots, n)$, defined as:
 \[
  \mathbf{D}_{n}= \sum_{i=1}^{n}c_{i}\diff{i}{x}.
 \]
It can be shown by induction that such operators satisfy the following identity for any $f_{1}, \ldots, f_{n+1}\in \mathscr{C}^{n}(\Omega)$:
 \[
  \sum_{i=0}^{n}(-1)^{i}\sum_{\mathrm{card}(I)=i}\left(\prod_{j\in I}f_{j}\right)\mathbf{D}_{n}\left(\prod_{k\in \left\{1, \ldots, n+1 \right\}\setminus I}f_{k}\right)=0.
 \]
For $n=1$, this identity corresponds to the Leibniz rule for $\mathbf{D}_1$:
 \[
  \mathbf{D}_{1}(f_{1}\cdot f_{2})-f_{1}\cdot \mathbf{D}_{1}(f_{2})-f_{2}\cdot \mathbf{D}_{1}(f_{1})=0
  \qquad
  \left(f_{1}, f_{2}\in \mathscr{C}^{1}(\Omega)\right).
 \]
For $n=2$, the identity becomes:
 \begin{multline*}
 \mathbf{D}_{2}(f_{1}\cdot f_{2} \cdot f_{3}) - f_{1} \mathbf{D}_{2}(f_{2}\cdot f_{3}) - f_{2} \mathbf{D}_{2}(f_{1}\cdot f_{3}) - f_{3} \mathbf{D}_{2}(f_{1} \cdot f_{2})
 \\
 + f_{1}\cdot f_{2}  \mathbf{D}_{2}(f_{3}) + f_{1}\cdot f_{3}  \mathbf{D}_{2}(f_{2}) +f_{2}\cdot f_{3}  \mathbf{D}_{2}(f_{1}) =0
 \\
  \left(f_{1}, f_{2}, f_{3}\in \mathscr{C}^{2}(\Omega)\right).
 \end{multline*}

In paper \cite{FecGse25}, we examined the operator equation corresponding to this identity and proved that, under certain mild conditions, the solutions to this equation are precisely the differential operators of order at most two. 
Building upon these findings, the current paper aims to extend our characterization to higher-order differential operators using direct algebraic methods.  

Let $n$ be arbitrarily fixed positive integer, while $k$ be a fixed nonnegative integer and $\Omega\subset \mathbb{R}$ be a nonempty and open set. In what follows, we will study operators $D\colon \mathscr{C}^{k}(\Omega)\to \mathscr{C}(\Omega)$ that fulfill 
\begin{equation}\label{id_n}
 \sum_{i=0}^{n}(-1)^{i}\sum_{\mathrm{card}(I)=i}\left(\prod_{j\in I}f_{j}\right)D\left(\prod_{k\in \left\{1, \ldots, n+1 \right\}\setminus I}f_{k}\right)=0
\end{equation}
for all $f_{1}, \ldots, f_{n+1}\in \mathscr{C}^{k}(\Omega)$. It is important to note that, unless explicitly stated, we do not assume the operator $D$ to be linear. As we will show, this identity \eqref{id_n} will prove to be a suitable tool for characterizing higher-order linear differential operators in function spaces

In addition to the study of equation \eqref{id_n}, we will also investigate the related operator identity
\begin{equation}\label{id_single}
 \sum_{i=0}^{n}(-1)^{i}\binom{n+1}{i}f^{i}\cdot D\left(f^{n+1-i}\right)=0 
 \qquad 
 \left(f\in \mathscr{C}^{n}(\Omega)\right)
\end{equation}
for the operator $D\colon \mathscr{C}^{k}(\Omega)\to \mathscr{C}(\Omega)$, where $f^{i}$ denotes the $i$\textsuperscript{th} power of the function $f$. 

\section{Results}

In our first lemma we will prove a localization property for operators satisfying equation \eqref{id_n}.

\begin{lem}\label{lem_loc_interval}
 Let $n$ be a fixed positive integer, $k$ be a fixed nonnegative integer, and $\Omega\subset \mathbb{R}$ be a nonempty open set.
 Suppose that the operator  $D\colon \mathscr{C}^{k}(\Omega)\to \mathscr{C}(\Omega)$ satisfies equation \eqref{id_n}. Then $D(\mathbf{1})= D(\mathbf{-1})=0$. Furthermore, $D$ is \emph{localized on intervals}, meaning that if $J\subset \Omega$ is an open interval and $f_{1}, f_{2}\in \mathscr{C}^{k}(\Omega)$ are functions such that $f_{1}\vert_{J}= f_{2}\vert_{J}$, then $D(f_{1})\vert_{J}= D(f_{2})\vert_{J}$.
\end{lem}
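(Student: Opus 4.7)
My plan is to handle the two conclusions independently. For $D(\mathbf 1)=D(\mathbf{-1})=0$ I would test \eqref{id_n} on constant inputs; for the localization I would test it on a bump function concentrated at a chosen point of $J$.

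Plugging $f_{1}=\cdots=f_{n+1}=\mathbf 1$ into \eqref{id_n} collapses every product to $1$ and reduces the identity to
\[
D(\mathbf 1)\,\sum_{i=0}^{n}(-1)^{i}\binom{n+1}{i}=0.
\]
Since the full alternating sum on $\{0,\dots,n+1\}$ vanishes while the omitted $i=n+1$ term equals $(-1)^{n+1}$, the coefficient above is $(-1)^{n}\ne 0$, hence $D(\mathbf 1)=0$. Plugging $f_{1}=\cdots=f_{n+1}=\mathbf{-1}$ next, and using $D(\mathbf 1)=0$ to discard the summands in which $(-1)^{n+1-i}=1$, an elementary binomial computation leaves $2^{n}D(\mathbf{-1})=0$.

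For the localization, I would fix $x_{0}\in J$ and choose $g\in\mathscr C^{k}(\Omega)$ with $g(x_{0})=1$ and $\supp g\subset J$; such a bump function is available for every $k\ge 0$. Substituting $f_{1}=f$ and $f_{2}=\cdots=f_{n+1}=g$ into \eqref{id_n}, splitting the subsets $I\subset\{1,\dots,n+1\}$ according to whether $1\in I$, and using $g(x_{0})=1$ when evaluating at $x_{0}$, reduces \eqref{id_n} to
\[
\sum_{i=0}^{n}(-1)^{i}\binom{n}{i}D(fg^{n-i})(x_{0})+f(x_{0})\sum_{i=1}^{n}(-1)^{i}\binom{n}{i-1}D(g^{n+1-i})(x_{0})=0.
\]
The $i=n$ term of the first sum equals $(-1)^{n}D(f)(x_{0})$, because $fg^{0}=f$; isolating it expresses $D(f)(x_{0})$ through $f(x_{0})$, the constants $D(g^{m})(x_{0})$, and the values $D(fg^{m})(x_{0})$ for $m=1,\dots,n$.

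To finish, given $f_{1},f_{2}$ with $f_{1}|_{J}=f_{2}|_{J}$, I would observe that for every $m\ge 1$ the function $g^{m}$ vanishes off $J$, so $f_{1}g^{m}$ and $f_{2}g^{m}$ coincide pointwise on all of $\Omega$, which gives $D(f_{1}g^{m})=D(f_{2}g^{m})$; together with $f_{1}(x_{0})=f_{2}(x_{0})$, the two instances of the isolated formula force $D(f_{1})(x_{0})=D(f_{2})(x_{0})$, and $x_{0}\in J$ was arbitrary. The main obstacle is spotting the right substitution: once one realizes that collapsing $n$ of the $n+1$ slots onto a single bump function with $g(x_{0})=1$ and $\supp g\subset J$ simultaneously picks out a unique copy of $D(f)(x_{0})$ (via $g^{0}=1$) and forces every other $D$-term in the identity to depend only on $f|_{J}$, the rest is routine bookkeeping.
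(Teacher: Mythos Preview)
Your proposal is correct and follows essentially the same route as the paper: constant substitutions for $D(\mathbf 1)=D(\mathbf{-1})=0$, and the bump-function substitution $f_{1}=f$, $f_{2}=\cdots=f_{n+1}=g$ with $g(x_{0})=1$, $\supp g\subset J$ for the localization, isolating the lone $D(f)(x_{0})$ term and observing that $f_{1}g^{m}=f_{2}g^{m}$ globally for $m\ge 1$. Your bookkeeping is in fact tidier than the paper's; the only place one might want an extra line is the ``elementary binomial computation'' for $D(\mathbf{-1})$, where the coefficient of $D(\mathbf{-1})$ is $\sum_{\substack{0\le i\le n\\ i\equiv n\,(\mathrm{mod}\,2)}}\binom{n+1}{i}=2^{n}$.
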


\begin{proof}
 First, we will show that $D(\mathbf{1})=0$. Substituting $f_{i}= \mathbf{1}$ for all $i=1, \ldots, n+1$ into equation \eqref{id_n}, we obtain
 \[
  \sum_{i=0}^{n}(-1)^{i}\sum_{\mathrm{card}(I)=i}\left(\prod_{j\in I}\mathbf{1}\right)D\left(\prod_{k\in \left\{1, \ldots, n+1 \right\}\setminus I}\mathbf{1}\right)=0.
 \]
 Since $\mathbf{1}^m = \mathbf{1}$ for any nonnegative integer $m$, this simplifies to
 \[
  \sum_{i=0}^{n}(-1)^{i}\binom{n+1}{i}D(\mathbf{1})=0,
 \]
 as there are $\binom{n+1}{i}$ subsets $I$ of $\{1, \ldots, n+1\}$ with cardinality $i$. Factoring out $D(\mathbf{1})$, we have
 \[
  \left(\sum_{i=0}^{n}(-1)^{i}\binom{n+1}{i}\right)D(\mathbf{1})=0.
 \]
 Using the binomial identity $$\sum_{i=0}^{n+1}(-1)^{i}\binom{n+1}{i} = (1-1)^{n+1} = 0,$$ we deduce that $$\sum_{i=0}^{n}(-1)^{i}\binom{n+1}{i} = -(-1)^{n+1}\binom{n+1}{n+1} = (-1)^{n}.$$ Therefore, $(-1)^{n}D(\mathbf{1})=0$, which implies $D(\mathbf{1})=0$.

 Next, apply \eqref{id_n} with $f_{i}=-\mathbf{1}$ for $i=1, \ldots, n+1$ to arrive at 
\[
 \sum_{i=0}^{n}(-1)^{i}\binom{n+1}{i}(-\mathbf{1})^{i}D((-\mathbf{1})^{n+1-i})=0, 
\]
i.e., 
\[
 \sum_{i=0}^{n}D((-\mathbf{1})^{n+1-i})=0, 
\]
from which $D(-\mathbf{1})=0$ follows, as $D(\mathbf{1})=0$.

 Now, we will prove that $D$ is localized on intervals. Let $J\subset \Omega$ be an open interval, and assume that $\varphi_{1}, \varphi_{2}\in \mathscr{C}^{k}(\Omega)$ are such that $\varphi_{1}(x) = \varphi_{2}(x)$ for all $x \in J$. Let $x\in J$ be an arbitrary point. We choose a function $g\in \mathscr{C}^{k}(\Omega)$ such that $g(x)=1$ and the support of $g$, denoted by $\supp g$, is a compact subset of $J$. Consequently, for any $y \in \supp g$, we have $\varphi_{1}(y) = \varphi_{2}(y)$, which implies $\varphi_{1}(y)g(y) = \varphi_{2}(y)g(y)$, so $\varphi_{1}g = \varphi_{2}g$. In fact, for any positive integer $j$, we have $\varphi_{1}g^{j} = \varphi_{2}g^{j}$.

 We now substitute $f_{1}= \varphi_{1}$ and $f_{i}= g$ for $i=2, \ldots, n+1$ into equation \eqref{id_n}, and then do the same with $f_{1}= \varphi_{2}$ and $f_{i}= g$ for $i=2, \ldots, n+1$. Evaluating both resulting equations at the point $x$, where $g(x)=1$, we get:
 \[
  \sum_{i=0}^{n}(-1)^{i}\sum_{\mathrm{card}(I)=i, 1\notin I}\left(\prod_{j\in I}g(x)\right)D\left(\varphi_{1}(x)\prod_{k\in \left\{1\right\}\cup \left\{2, \ldots, n+1 \right\}\setminus I}g(x)\right) + 
  \]
 \[
  \sum_{i=0}^{n}(-1)^{i}\sum_{\mathrm{card}(I)=i, 1\in I}\left(\varphi_{1}(x)\prod_{j\in I\setminus \{1\}}g(x)\right)D\left(\prod_{k\in \left\{2, \ldots, n+1 \right\}\setminus I}g(x)\right) = 0
  \]
 and similarly for $\varphi_{2}$. Subtracting the second equation from the first, and using $g(x)=1$ and $\varphi_{1}(x) = \varphi_{2}(x)$, we obtain:
 \[
  \sum_{i=0}^{n}(-1)^{i}\sum_{\mathrm{card}(I)=i, 1\notin I} (D(\varphi_{1}g^{n+1-i})(x) - D(\varphi_{2}g^{n+1-i})(x)) + 
  \]
 \[
  \sum_{i=0}^{n}(-1)^{i}\sum_{\mathrm{card}(I)=i, 1\in I} \varphi_{1}(x) (D(g^{n+1-i})(x) - D(g^{n+1-i})(x)) = 0.
  \]
 The second sum is clearly zero. For the first sum, if $n+1-i \ge 1$, then $\varphi_{1}g^{n+1-i} = \varphi_{2}g^{n+1-i}$, so their $D$ values are equal. The only case left is when $n+1-i = 0$, i.e., $i = n+1$, which is not in the range of the sum.

 Let us consider the equation at $x$ more carefully.
 \[
  D(\varphi_{1}g^{n})(x) - \varphi_{1}(x)D(g^{n})(x) - \sum_{j=2}^{n+1} g(x)D(\varphi_{1}g^{n-1})(x) + \ldots + (-1)^{n} \prod_{j=2}^{n+1} g(x) D(\varphi_{1})(x) = 0
  \]
 and the same for $\varphi_{2}$. Subtracting and using the fact that $\varphi_{1}g^{j} = \varphi_{2}g^{j}$ at $x$ for $j \ge 1$, we are left with
 \[
  (-1)^{n} g(x)^{n} (D(\varphi_{1})(x) - D(\varphi_{2})(x)) = 0.
  \]
 Since $g(x)=1$, we have $(-1)^{n} (D(\varphi_{1})(x) - D(\varphi_{2})(x)) = 0$, which implies $D(\varphi_{1})(x) = D(\varphi_{2})(x)$. As $x\in J$ was arbitrary, we conclude that $D(\varphi_{1})\vert_{J}= D(\varphi_{2})\vert_{J}$. Thus $D$ is localized on intervals.
\end{proof}

In the next result we will recall a general property of operators localized on intervals, which is due to König and Milman.

\begin{prop}[König--Milman, Proposition 3.3]\label{prop_loc_pointwise}
 Let $k$ be a fixed nonnegative integer, $\Omega\subset \mathbb{R}$ be an open set and suppose that the operator $T\colon \mathscr{C}^{k}(\Omega)\to \mathscr{C}(\Omega)$ is localized on intervals, then there exists a function $F\colon D\times \mathbb{R}^{k+1}\to \mathbb{R}$ such that 
 \[
  (Tf)(x)= F(x, f(x), f'(x), \ldots, f^{(k)}(x))
 \]
for all $x\in D$ and for all $f\in \mathscr{C}^{k}(\Omega)$. 
\end{prop}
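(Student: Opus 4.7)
The plan is to construct $F$ by showing that $(Tf)(x)$ depends on $f$ only through the $k$-jet $\bigl(f(x), f'(x), \ldots, f^{(k)}(x)\bigr)$. Once this jet-dependence is established, one simply sets
\[
F(x, a_0, a_1, \ldots, a_k) := (Tp_{x,a})(x),
\]
where $p_{x,a}$ is any function in $\mathscr{C}^k(\Omega)$ with the prescribed jet at $x$; a natural choice is the Taylor polynomial $p_{x,a}(t) = \sum_{j=0}^{k}\frac{a_j}{j!}(t-x)^j$, suitably extended to $\Omega$. Thus the essential content is the reduction: if $f_1, f_2 \in \mathscr{C}^k(\Omega)$ and $x_0 \in \Omega$ satisfy $f_1^{(j)}(x_0) = f_2^{(j)}(x_0)$ for all $j=0,1,\ldots,k$, then $(Tf_1)(x_0) = (Tf_2)(x_0)$.

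To establish the reduction, I would use a cut-off interpolation. Fix a family of bump functions $\chi_\delta \in \mathscr{C}^k(\R)$ with $\chi_\delta \equiv 1$ on $(x_0 - \delta/2, x_0 + \delta/2)$, $\supp \chi_\delta \subset (x_0 - \delta, x_0 + \delta)$, and derivative bounds $\|\chi_\delta^{(j)}\|_\infty \leq C_j \delta^{-j}$ for $j=0,\ldots,k$. Define the interpolant $h_\delta := \chi_\delta f_2 + (1-\chi_\delta) f_1$, so that $h_\delta \equiv f_2$ on a neighborhood of $x_0$ and $h_\delta \equiv f_1$ outside $(x_0-\delta, x_0+\delta)$. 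The localization property established in Lemma \ref{lem_loc_interval} then yields $(Th_\delta)(x_0) = (Tf_2)(x_0)$ for every $\delta > 0$.

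The correction $h_\delta - f_1 = \chi_\delta(f_2-f_1)$ features the factor $f_2-f_1$, whose $k$-jet at $x_0$ vanishes. Taylor's theorem then gives $\bigl|(f_2-f_1)^{(j)}(x)\bigr| = o\bigl(|x-x_0|^{k-j}\bigr)$ as $x \to x_0$, for $j=0,\ldots,k$. A Leibniz expansion combined with the $\delta^{-j}$ bounds on $\chi_\delta^{(j)}$ shows that this Taylor decay exactly cancels the blow-up of the cut-off derivatives, so that $\|h_\delta - f_1\|_{\mathscr{C}^k(\Omega)} \to 0$ as $\delta \to 0$.

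The genuine obstacle is the final step: converting $h_\delta \to f_1$ in $\mathscr{C}^k$-norm into the pointwise convergence $(Th_\delta)(x_0) \to (Tf_1)(x_0)$, which would close the argument via the chain of equalities $(Tf_2)(x_0) = (Th_\delta)(x_0) \to (Tf_1)(x_0)$. Bare localization on intervals guarantees only germ-dependence of $(Tf)(x_0)$, and upgrading this to jet-dependence requires an additional continuity-type hypothesis on $T$. Supplying precisely such a hypothesis and exploiting it is the content of the original König--Milman argument; here I would transfer their reasoning verbatim, since the present paper uses the proposition only as a black box for subsequent characterization results.
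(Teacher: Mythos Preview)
The paper does not supply its own proof of this proposition; it is quoted as an external result of K\"onig and Milman and used as a black box, exactly as you surmise in your final sentence. So there is no ``paper's proof'' to compare against beyond the original K\"onig--Milman argument.

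That said, your sketch contains a genuine gap, and it is not the one you think can be patched by importing an extra continuity hypothesis. The proposition is stated (and is true) \emph{without} any continuity assumption on the operator $T$; the only continuity in play is that each image $Tf$ lies in $\mathscr{C}(\Omega)$. Your bump-function interpolation leads you to need $T(h_\delta)(x_0)\to T(f_1)(x_0)$ from $h_\delta\to f_1$ in $\mathscr{C}^k$, and you correctly observe that bare interval-localization does not give this. But adding an operator-continuity hypothesis would be changing the statement, not proving it.

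The actual K\"onig--Milman device bypasses this entirely by a one-sided gluing. Given $f_1,f_2\in\mathscr{C}^k(\Omega)$ with the same $k$-jet at $x_0$, set
\[
h(x)=\begin{cases} f_1(x), & x< x_0,\\[2pt] f_2(x), & x\ge x_0.\end{cases}
\]
Matching jets at $x_0$ force $h\in\mathscr{C}^k(\Omega)$. On the open interval to the left of $x_0$ (inside the component of $\Omega$ containing $x_0$) one has $h=f_1$, so interval-localization gives $(Th)(x)=(Tf_1)(x)$ there; symmetrically $(Th)(x)=(Tf_2)(x)$ to the right. Now use only that $Th$, $Tf_1$, $Tf_2$ are continuous functions: letting $x\to x_0^-$ yields $(Th)(x_0)=(Tf_1)(x_0)$, and $x\to x_0^+$ yields $(Th)(x_0)=(Tf_2)(x_0)$. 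Hence $(Tf_1)(x_0)=(Tf_2)(x_0)$, and your construction of $F$ via Taylor polynomials then goes through verbatim. The point is that continuity of the \emph{output} functions, not of $T$, is what upgrades germ-dependence to jet-dependence.
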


Our next lemma is a well-known formula for the $k$\textsuperscript{th}-derivative of the function $\ln \circ f$ for $f\in \mathscr{C}^{k}(\Omega)$ such that $f>0$.

\begin{lem}\label{lem_ln}
 Let $k$ be a positive integer, $\Omega\subset \mathbb{R}$ and $f\in \mathscr{C}^{k}(\Omega)$ be a positive function. Then 
 \begin{multline*}
  \diff{k}{x}\ln \circ f(x)
  \\
  = 
  \sum_{\substack{\sum_{i=1}^{k}i m_{i}=k}}\frac{k!}{m_{1}!\cdots m_{k}!} \cdot 
  \frac{(-1)^{m_{1}+\cdots+m_{k}-1}(m_{1}+\cdots+m_{k}-1)!}{f(x)^{m_{1}+\cdots+ m_{k}}} \cdot \prod_{1\leq j\leq  k} \left(\frac{f^{(j)}(x)}{j!}\right)^{m_{j}} 
  \\ 
  \left(x\in \Omega\right). 
 \end{multline*}
\end{lem}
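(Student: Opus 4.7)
The plan is to recognize the claim as a direct specialization of the classical Faà di Bruno formula for higher derivatives of a composition $g\circ f$, applied to the outer function $g = \ln$.

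First I would recall (or cite from a standard reference such as Comtet's \emph{Advanced Combinatorics}) the Faà di Bruno formula in its multi-index form: for sufficiently smooth $g$ and $f$,
\[
 \frac{d^{k}}{dx^{k}}(g\circ f)(x)
 = \sum_{\sum_{i=1}^{k} i\, m_{i}=k} \frac{k!}{m_{1}!\cdots m_{k}!}\,
 g^{(m_{1}+\cdots+m_{k})}(f(x))\,
 \prod_{j=1}^{k} \left(\frac{f^{(j)}(x)}{j!}\right)^{m_{j}}.
\]
If a self-contained argument is preferred, one proves this by induction on $k$: differentiate the right-hand side expression for $k-1$, apply the product rule, and reindex the resulting terms. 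The combinatorial fact behind the reindexing is that every multi-index $(m_1,\dots,m_k)$ with $\sum i m_i = k$ arises from a multi-index at level $k-1$ by either incrementing some $m_i$ while decrementing $m_{i+1}$, or by incrementing $m_1$, and the binomial bookkeeping of these contributions reproduces the coefficient $k!/(m_1!\cdots m_k!)$.

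Second, I would compute the derivatives of $\ln$ on $(0,\infty)$. A one-line induction gives
\[
 \ln^{(M)}(y) = \frac{(-1)^{M-1}(M-1)!}{y^{M}} \qquad (M\ge 1).
\]
Writing $M := m_{1}+\cdots+m_{k}$ and substituting this expression for $g^{(M)}(f(x))$ into the Faà di Bruno formula applied to $g=\ln$ yields the displayed identity verbatim. The hypothesis $f>0$ is used precisely to guarantee that $\ln\circ f$ is well-defined and of class $\mathscr{C}^{k}(\Omega)$, so that all the derivatives appearing on both sides make sense.

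No substantive obstacle is expected: the entire argument is combinatorial bookkeeping for a classical identity, and the only nontrivial step — the proof of Faà di Bruno's formula itself — is standard material that may simply be cited.
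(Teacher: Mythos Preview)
Your proposal is correct. The paper does not supply a proof of this lemma at all: it is stated as ``a well-known formula'' and left unproved. Your derivation via the Fa\`a di Bruno formula together with the elementary computation $\ln^{(M)}(y)=(-1)^{M-1}(M-1)!/y^{M}$ is exactly the standard route to this identity, and nothing more is needed.
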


\begin{dfn}
 Let $G$ and $S$ be commutative semigroups, and let $n$ be a positive integer. A function $A\colon G^{n}\to S$ is said to be \emph{$n$-additive} if, for each of its $n$ variables, it acts as a homomorphism from $G$ to $S$ when the other $n-1$ variables are held constant. When $n=1$, the function $A$ is simply called \emph{additive}, and when $n=2$, it is termed \emph{bi-additive}.
\end{dfn}

The \emph{diagonalization}, also known as the \emph{trace}, of an $n$-additive function $A\colon G^{n}\to S$ is a function $A^{\ast}\colon G\to S$ defined by evaluating $A$ at the same element $x \in G$ in all $n$ positions. That is, for any $x\in G$,
 \[
  A^{\ast}(x)=A\left(x, \ldots, x\right).
 \]
 
 \begin{dfn}
 Let $G$ and $S$ be commutative semigroups. A function $p\colon G\to S$ is called a \emph{generalized polynomial} if it can be expressed as a sum of diagonalizations of symmetric multi-additive functions from $G$ to $S$. More formally, a function $p\colon G\to S$ is a generalized polynomial if and only if there exists a nonnegative integer $n$ and, for each $k \in \{0, 1, \ldots, n\}$, a symmetric, $k$-additive function $A_{k}\colon G^{k}\to S$ such that $p(x) = \sum_{k=0}^{n}A^{\ast}_{k}(x)$ for all $x \in G$. In this case, we also say that $p$ is a generalized polynomial \emph{of degree at most $n$}.

 For a nonnegative integer $n$, functions $p_{n}\colon G\to S$ of the form
\[
 p_{n}= A_{n}^{\ast},
\]
where $A_{n}\colon G^{n}\to S$ is a symmetric and  $n$-additive function, are called \emph{generalized monomials of degree $n$}.
\end{dfn}

The next lemma is fundamental for us since it provides a solution of an equation of Aichinger in a general settings, which will appear later on during our study of equation \eqref{id_n}. We will use a result that was proven by Almira (see \cite{Alm23, Alm23b}).

\begin{lem}\label{lem_aichinger}
Let $S$ be a commutative cancellative semigroup and $H$ be a commutative group. Let $G=S-S$
be a natural extension of $S$. Assume that multiplication by $m!$ is bijective on $H$. Let $f\colon S\to H$ be a solution of 
\[
 f(x_{1}+\cdots+x_{m+1})= \sum_{i=1}^{m+1} g_{i}(x_{1}, \ldots, x_{i-1}, \hat{x_{i}}, x_{i-1}, \ldots, x_{m+1}) 
 \qquad 
 \left(x_{1}, \ldots, x_{m+1}\in S\right)
\]
with certain functions $g_{i}\colon S^{m}\to H$, where for all $i=1, \ldots, m+1$, the symbol $\hat{x_{i}}$ denotes that the function $g_{i}$ does not depend on the variable $x_{i}$. Then there exists a generalized polynomial $F\colon G\to H$ of degree at most $m$, such that $F\vert_{S}=f$ and the functions $g_{i}$ are also generalized polynomials of degree at most $m$. 
\end{lem}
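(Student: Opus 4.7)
The plan is to reduce the given identity to the classical Aichinger equation on the Grothendieck group $G = S - S$ and then to invoke the characterization of its solutions as generalized polynomials, which is the content of the results of Almira \cite{Alm23, Alm23b}. The guiding intuition is that the right-hand side, in which the $i$-th summand does not depend on the $i$-th variable, is exactly what makes iterated alternating-sum differences of $f$ collapse: each $g_{i}$-contribution vanishes when paired with a difference operator in its missing variable, so $m+1$ successive differences of $f$ must be zero.

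First I would extend $f$ from $S$ to $G$. Using cancellativity, every element of $G$ can be written as $s_{1}-s_{2}$ with $s_{i}\in S$, and the Aichinger equation, applied with sufficiently many free arguments drawn from $S$, uniquely forces the value $F(s_{1}-s_{2})$; the $m!$-divisibility of $H$ is what allows inversion of the combinatorial coefficients arising in this substitution calculus, and cancellativity handles independence of the representatives. A parallel extension applies to each $g_{i}$, giving $\tilde g_{i}\colon G^{m}\to H$, and the extended function $F$ still satisfies the same Aichinger-type identity on $G$ in terms of the $\tilde g_{i}$.

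Once we are on the group $G$, I would substitute $x_{j}\mapsto x_{j}+\varepsilon_{j} h_{j}$ for $\varepsilon_{j}\in\{0,1\}$ and form the alternating sum $\sum_{\varepsilon\in\{0,1\}^{m+1}}(-1)^{|\varepsilon|}$ of the resulting equations. Since $\tilde g_{i}$ is independent of its $i$-th variable, the $\varepsilon_{i}$-alternation kills its contribution, and the total alternating sum annihilates every $\tilde g_{i}$ while leaving $\Delta_{h_{1}}\cdots\Delta_{h_{m+1}} F(x_{1}+\cdots+x_{m+1})=0$. By the classical Djokovi\'c--Mazur--Orlicz theorem, $F$ is therefore a generalized polynomial of degree at most $m$. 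Each $g_{i}$ is then seen to be a generalized polynomial of degree at most $m$ by fixing $m$ of its arguments and isolating $g_{i}$ from the Aichinger equation as a linear combination of values of $F$ and the remaining $g_{j}$'s. The main obstacle is the initial extension step --- guaranteeing that the Aichinger identity descends coherently to $G$ for $F$ and all the $\tilde g_{i}$ simultaneously --- and this is precisely what Almira's theorem provides, which we invoke to conclude.
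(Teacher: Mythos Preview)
The paper does not give its own proof of this lemma: it is stated as a result due to Almira and simply cited to \cite{Alm23, Alm23b}. Your proposal likewise defers the substantive step (the coherent extension of $f$ and the $g_{i}$ from $S$ to $G$) to Almira's theorem, so the two approaches coincide; the difference-operator sketch you add on top is a correct account of the underlying mechanism but is not needed beyond what the citation already provides.
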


\begin{cor}\label{cor_aichinger}
 Let $k$ and $m$ be nonnegative integers and $f\colon \mathbb{R}^{k+1}\to \mathbb{R}$  be a function such that 
 \[
 f(x_{1}+\cdots+x_{m+1})= \sum_{i=1}^{m+1} g_{i}(x_{1}, \ldots, x_{i-1}, \hat{x_{i}}, x_{i-1}, \ldots, x_{m+1}) 
 \qquad 
 \left(x_{1}, \ldots, x_{m+1}\in \mathbb{R}^{k+1}\right)
\]
holds with certain functions $g_{i}\colon \mathbb{R}^{m(k+1)}\to \mathbb{R}$. If the function $f$ is continuous at a point $x^{\ast}\in \mathbb{R}^{k+1}$, then $f$ is a $(k+1)$-variable ordinary polynomial of degree at most $m$. 
\end{cor}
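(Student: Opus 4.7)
The plan is to reduce the statement to Lemma \ref{lem_aichinger} combined with the classical regularity theory of generalized polynomials. Since $\mathbb{R}^{k+1}$ is already a commutative group, we take $S=G=\mathbb{R}^{k+1}$ and $H=\mathbb{R}$ in that lemma; multiplication by $m!$ is visibly bijective on $\mathbb{R}$. The lemma therefore yields a decomposition
\[
 f(x) = \sum_{j=0}^{m} A_{j}^{\ast}(x) \qquad (x\in\mathbb{R}^{k+1}),
\]
where each $A_{j}\colon (\mathbb{R}^{k+1})^{j}\to\mathbb{R}$ is symmetric and $j$-additive.

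The remaining task is to upgrade this generalized polynomial to an ordinary polynomial, using the hypothesis of continuity of $f$ at $x^{\ast}$. I would proceed by backward induction on the degree, peeling off the top homogeneous component. A standard Fr\'echet-type identity asserts that the $m$-th mixed difference is independent of the base point and equals $m!$ times the top symmetric form, i.e.
\[
 m!\,A_{m}(y_{1},\ldots,y_{m}) = \sum_{\varepsilon\in\{0,1\}^{m}}(-1)^{m-|\varepsilon|}\, f\Bigl(x^{\ast}+\sum_{i=1}^{m}\varepsilon_{i}y_{i}\Bigr).
\]
Each summand on the right-hand side is $f$ evaluated at a point that tends to $x^{\ast}$ as $(y_{1},\ldots,y_{m})\to 0$, so the assumed continuity transfers to joint continuity of $A_{m}$ at the origin. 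A symmetric multi-additive map that is continuous at one point is $\mathbb{R}$-multilinear by Cauchy's classical theorem applied variable by variable; hence $A_{m}^{\ast}$ is an ordinary homogeneous polynomial of degree $m$ on $\mathbb{R}^{k+1}$. Subtracting $A_{m}^{\ast}$ from $f$ produces a generalized polynomial of degree at most $m-1$ that remains continuous at $x^{\ast}$, and the induction step concludes the proof.

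The main obstacle, in my view, is precisely this transfer of regularity from a single point of continuity of $f$ to each of the multi-additive components. The Fr\'echet difference identity is the crucial combinatorial ingredient, since it exhibits $A_{m}$ as a finite linear combination of translates of $f$ evaluated near $x^{\ast}$; the rest is an application of Cauchy's theorem in the stronger form which derives $\mathbb{R}$-linearity from additivity together with continuity at a single point. This principle is thoroughly documented in the functional-equations literature (Kuczma, Sz\'ekelyhidi), so in the present setting only routine bookkeeping should be required.
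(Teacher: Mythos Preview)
The paper states Corollary~\ref{cor_aichinger} without proof, treating it as an immediate consequence of Lemma~\ref{lem_aichinger}; your argument supplies exactly the details one would expect, namely: apply the lemma to obtain that $f$ is a generalized polynomial of degree at most $m$, and then invoke the classical regularity theory (a generalized polynomial on $\mathbb{R}^{k+1}$ continuous at a single point is an ordinary polynomial) to conclude. This is correct and is the intended route.

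One small remark on your inductive peeling argument: the claim ``a symmetric multi-additive map continuous at one point is $\mathbb{R}$-multilinear by Cauchy's theorem applied variable by variable'' is true but not quite as immediate as the phrasing suggests. Joint continuity of $A_{m}$ at the origin does not directly give, for fixed nonzero $y_{2},\ldots,y_{m}$, continuity of $y_{1}\mapsto A_{m}(y_{1},y_{2},\ldots,y_{m})$ at any particular point. One standard way around this is to use $\mathbb{Q}$-homogeneity in each slot to convert continuity at the origin into local boundedness of $A_{m}$ everywhere, and then apply Cauchy's theorem (additive $+$ locally bounded $\Rightarrow$ linear) coordinatewise. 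Alternatively, and perhaps more economically, one can bypass the induction entirely: continuity of $f$ at $x^{\ast}$ gives boundedness of $f$ on a neighbourhood, and a generalized polynomial bounded on a nonempty open set is an ordinary polynomial---this is precisely the result recorded in the references you cite (Kuczma, Sz\'ekelyhidi). Either way the conclusion stands.
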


Now, we are ready to state and prove our main result.

\begin{thm}\label{thm_main}
 Let $k$ be a fixed nonnegative integer and $\Omega\subset \mathbb{R}$ be a nonempty open set.
 Suppose that the operator  $D\colon \mathscr{C}^{k}(\Omega)\to \mathscr{C}(\Omega)$ satisfies \eqref{id_n} for all $f_{1}, \ldots, f_{n+1}\in \mathscr{C}^{k}(\Omega)$.
 Then there exist functions $c_{1}, \dots , c_{n}, d_1, \dots , d_{n}\in \mathscr{C}^{k}(\Omega)$ such that 
\begin{align}
 D(f)(x) &= c_1(x)f'(x)+ c_2(x) f''(x) + \dots + c_n(x) f^{(n)}(x) \nonumber \\ &+ d_1(x) f(x) (\ln (f(x)) )+ d_2(x) f(x) (\ln f(x) )^2+\dots  + d_n(x) f(x) (\ln f(x) )^n,  
\label{D}
\end{align} 
holds for all $f\in \mathscr{C}^{k}(\Omega)$ and $x\in \Omega$. 
Further, if $k<n$, then $c_{k+1}=c_{k+2}=\dots = c_{n}=0$.

Conversely, the operator $D$ given by the formula \eqref{D} satisfies \eqref{id_n}. 
\end{thm}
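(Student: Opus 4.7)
The plan is to use the localization results to reduce \eqref{id_n} to a pointwise functional equation at each $x_0\in \Omega$, then to invoke the Aichinger-type Corollary \ref{cor_aichinger} to obtain a polynomial structure for an auxiliary function, and finally to collapse that polynomial into the claimed form using the hypothesis that $D$ takes values in $\mathscr{C}(\Omega)$.

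By Lemma \ref{lem_loc_interval} together with Proposition \ref{prop_loc_pointwise}, there exists a representing function $F\colon \Omega\times\mathbb{R}^{k+1}\to\mathbb{R}$ such that $D(f)(x)=F(x,f(x),\ldots,f^{(k)}(x))$. Fix $x_0\in \Omega$ and, for $f$ with $f(x_0)>0$, write $f=e^{h}$ with $h\in \mathscr{C}^{k}(\Omega)$. Since Faà di Bruno's formula expresses $(e^{h})^{(j)}(x_0)$ via Bell polynomials in $h(x_0),\ldots,h^{(j)}(x_0)$, the assignment $\Psi(a_0,\ldots,a_k):=e^{-a_0}D(e^{h})(x_0)$, for any $h$ with $h^{(j)}(x_0)=a_j$, defines an unambiguous map $\Psi\colon \mathbb{R}^{k+1}\to \mathbb{R}$. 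Substituting $f_j=e^{h_j}$ into \eqref{id_n}, evaluating at $x_0$, and dividing by $\exp\bigl(\sum_j h_j(x_0)\bigr)$ produces
\[
 \sum_{J\subseteq\{1,\ldots,n+1\}}(-1)^{|J|}\,\Psi\!\Bigl(\sum_{j\in J}a^{(j)}\Bigr)=0,
\]
where $a^{(j)}=(h_j(x_0),\ldots,h_j^{(k)}(x_0))\in \mathbb{R}^{k+1}$ is arbitrary and the $J=\emptyset$ term vanishes by $\Psi(0)=D(\mathbf{1})(x_0)=0$ from Lemma \ref{lem_loc_interval}. Isolating the $J=\{1,\ldots,n+1\}$ term puts this into the form required by Corollary \ref{cor_aichinger}; continuity of $\Psi$ at a single point, secured by taking the Taylor polynomial representative $h$ of $(a_0,\ldots,a_k)$ and noting that $x\mapsto D(e^{h})(x)$ is continuous on $\Omega$, then upgrades the corresponding generalized polynomial to an ordinary polynomial in $(a_0,\ldots,a_k)$ of degree at most $n$.

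Transforming back via $a_j=(\ln\circ f)^{(j)}(x_0)$ as given by Lemma \ref{lem_ln}, one gets $F(x_0,y_0,\ldots,y_k)=y_0\,\Psi(a_0,\ldots,a_k)$ for $y_0>0$. The decisive step now exploits $D(f)\in \mathscr{C}(\Omega)$: for a $\mathscr{C}^k$ test function vanishing at $x_0$ with prescribed higher derivatives, $F(x_0,\cdot)$ must extend continuously as $y_0\to 0$. Every monomial $a_0^{s_0}a_1^{s_1}\cdots a_k^{s_k}$ of $\Psi$, when expressed in the $y$-coordinates and multiplied by $y_0$, becomes a Laurent expression in $y_0$ times a polynomial in $y_1,\ldots,y_k$ and possibly $\ln y_0$; continuity at $y_0=0$ forbids any combination that retains a negative power of $y_0$ multiplying a nontrivial polynomial in $y_1,\ldots,y_k$ or multiplying a logarithmic factor. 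The survivors are precisely the pure powers $a_0^i$, which contribute the terms $d_i(x_0)\,y_0(\ln y_0)^i$, and the specific Bell-polynomial combinations $B_i(a_1,\ldots,a_i)$ which, after multiplication by $y_0$, collapse exactly into $c_i(x_0)\,y_i$. Letting $x_0$ vary over $\Omega$ and testing against families of $\mathscr{C}^{k}$ functions yields coefficients $c_i,d_i\in \mathscr{C}^{k}(\Omega)$; a continuous extension across $y_0=0$, with $\ln y_0$ interpreted as $\ln|y_0|$, handles signs; and $c_{k+1}=\cdots=c_n=0$ when $k<n$ is immediate since $F(x_0,\cdot)$ admits no inputs $y_{k+1},\ldots,y_n$.

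The main technical obstacle is this final collapse: Corollary \ref{cor_aichinger} alone permits $\Psi$ to be any polynomial of degree at most $n$ in $k+1$ variables, while the theorem insists on the highly restricted sub-family above; the pruning is carried out not by \eqref{id_n} itself but by the external continuity requirement $D(f)\in \mathscr{C}(\Omega)$, which across zero sets of $f$ rules out all mixed monomials except the Bell and pure-$a_0$ ones. The converse is an unconditional verification: since \eqref{id_n} is linear in $D$, it suffices to check separately that each map $f\mapsto c_i(x)\,f^{(i)}$ and each $f\mapsto d_i(x)\,f(\ln|f|)^i$ satisfies \eqref{id_n}. For the derivative parts, the generalized Leibniz rule turns the alternating sum into a telescoping cascade reducing at each order to $\sum_{j=0}^{n+1}(-1)^{j}\binom{n+1}{j}=0$; for the logarithmic parts, one expands $\bigl(\sum_{k\notin I}\ln|f_k|\bigr)^i$ via the multinomial theorem and observes that the resulting sum over subsets $I$ cancels pairwise.
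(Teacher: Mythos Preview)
Your approach mirrors the paper's almost exactly: localization via Lemma~\ref{lem_loc_interval} and Proposition~\ref{prop_loc_pointwise}, the exponential substitution $f=e^{h}$ to pass to the auxiliary function $\Psi$ (the paper's $G(x_{0},\cdot)$), the Aichinger-type conclusion that $\Psi$ is a polynomial of degree at most $n$, the conversion back via Lemma~\ref{lem_ln}, and the pruning of the resulting polynomial through the requirement that $D(f)\in\mathscr{C}(\Omega)$ across zeros of $f$. The one place your write-up slips is the justification that $\Psi$ is continuous at a point: continuity of $x\mapsto D(e^{h})(x)$ for a \emph{fixed} $h$ is continuity in $x$, not in the jet variable $a$, so as stated it does not secure the hypothesis of Corollary~\ref{cor_aichinger}; the paper is equally terse at this juncture, simply asserting the ordinary-polynomial form for $G(x,\cdot)$ after invoking Lemma~\ref{lem_aichinger}.
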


\begin{proof}
Suppose that the operator $D\colon \mathscr{C}^{k}(\Omega)\to \mathscr{C}(\Omega)$ satisfies equation \eqref{id_n}. By Lemma \ref{lem_loc_interval}, the operator $D$ is localized on intervals. Applying Proposition \ref{prop_loc_pointwise}, this localization property implies that for any function $f\in \mathscr{C}^{k}(\Omega)$ and any point $x\in \Omega$, the value of $D(f)(x)$ depends only on the values of $f$ and its derivatives up to order $k$ at the point $x$. Therefore, there exists a function $F\colon \Omega\times \mathbb{R}^{k+1}\to \mathbb{R}$ such that 
\[
 D(f)(x)= F(x, f(x), \ldots, f^{(k)}(x))
\]
for all $x\in \Omega$ and for all $f\in \mathscr{C}^{k}(\Omega)$. 

To determine the specific form of $D$, we consider the operator $P\colon \mathscr{C}^{k}(\Omega)\to \mathscr{C}(\Omega)$ defined by 
\[
 P(g)(x)= \frac{D(\exp\circ g)(x)}{\exp\circ g(x)} 
 \qquad 
 \left(x\in \Omega, g\in \mathscr{C}^{k}(\Omega)\right). 
\]
Since $D$ is pointwise localized, the operator $P$ is also pointwise localized. Thus, there exists a function $G\colon \Omega \times \mathbb{R}^{k+1}\to \mathscr{C}(\Omega)$ such that 
\[
 P(g)(x)= G(x, g(x), \ldots, g^{(k)}(x)) 
 \qquad 
 \left(x\in \Omega, g\in \mathscr{C}^{k}(\Omega)\right). 
\]
Let $g_{1}, \ldots, g_{n+1}\in \mathscr{C}^{k}(\Omega)$ be arbitrary functions. Substituting $f_j = \exp(g_j)$ into equation \eqref{id_n}, we get 
\begin{align*}
 0 &= \sum_{i=0}^{n}(-1)^{i}\sum_{\mathrm{card}(I)=i}\left(\prod_{k\in I}\exp\circ g_{k}\right)D\left(\prod_{k\notin I}\exp \circ g_{k}\right) \\
 &= \left(\prod_{j=1}^{n+1}\exp\circ g_{j}\right) \cdot  \sum_{i=0}^{n}(-1)^{i}\sum_{\mathrm{card}(I)=i}\frac{D\left(\prod_{k\notin I}\exp \circ g_{k}\right)}{\prod_{k\notin I}\exp\circ g_{k}} \\
 &= \left(\prod_{j=1}^{n+1}\exp\circ g_{j}\right) \cdot  \sum_{i=0}^{n}(-1)^{i}\sum_{\mathrm{card}(I)=i}\frac{D\left(\exp\left(\sum_{k\notin I}g_{k}\right)\right)}{\exp\left(\sum_{k\notin I}g_{k}\right)} \\
 &= \left(\prod_{j=1}^{n+1}\exp\circ g_{j}\right) \cdot  \sum_{i=0}^{n}(-1)^{i}\sum_{\mathrm{card}(I)=i}P\left(\sum_{k\notin I}g_{k}\right). 
\end{align*}
Since $\exp(x) > 0$ for all $x$, we have $$\sum_{i=0}^{n}(-1)^{i}\sum_{\mathrm{card}(I)=i}P\left(\sum_{k\notin I}g_{k}\right) = 0.$$ 
For any $v_{j}= (v_{j}^{l})_{l=0}^{k}\in \mathbb{R}^{k+1}$, $j=1, \ldots, n+1$ and $x\in \Omega$ there exists functions $g_{1}, \ldots, g_{n+1}\in \mathscr{C}^{k}(\Omega)$ with 
\[
 g_{j}^{(l)}(x)= v_{j}^{l}. 
\]
Therefore, the function $G$ satisfies 
\[
 \sum_{i=0}^{n}(-1)^{i}\sum_{\mathrm{card}(I)=i} G(x, \sum_{j\notin I}v_{j})=0
\]
for all $x\in \Omega$ and for all $v_{1}, \ldots, v_{n+1}\in \mathbb{R}^{k+1}$.
This and Lemma \ref{lem_aichinger} imply that for each fixed $x\in \Omega$, the mapping $\mathbb{R}^{k+1}\ni v \mapsto G(x, v)$ is a generalized polynomial of degree at most $n$. 
The continuity of $P(g)$ ensures the continuity of $x \mapsto G(x, g(x), \ldots, g^{(k)}(x))$, which means the coefficients of this polynomial depend continuously on $x$. Thus, we can write
\[
 G(x, v)= \sum_{|\alpha|\leq n} c_{\alpha}(x)v^{\alpha} 
 \qquad 
 \left(x\in \Omega, v\in \mathbb{R}^{k+1}\right)
\]
with some functions $c_{\alpha}\in \mathscr{C}(\Omega)$ and for all multi-index $\alpha\in \mathbb{R}^{k+1}$ with $|\alpha|\leq n$.

Now, let $f\in \mathscr{C}^{k}(\Omega)$ be a positive function. Then $f= \exp \circ g$ for some $g = \ln \circ f \in \mathscr{C}^{k}(\Omega)$. We have 
\begin{align*}
 D(f)(x) &= f(x)\cdot P(\ln\circ f)(x) \\
 &= f(x) \cdot \sum_{i=0}^{n}\sum_{\substack{|\alpha|=i\\ \alpha \in \mathbb{N}_{0}^{k+1}}} 
 c_{\alpha}(x)\left((\ln f)(x)\right)^{\alpha_{0}}\cdot \left((\ln f)'(x)\right)^{\alpha_{1}} \cdots \left((\ln f)^{(k)}(x)\right)^{\alpha_{k}}.
\end{align*}

If $f\in \mathscr{C}^{k}(\Omega)$ and $f(x)<0$ at some $x\in \Omega$, then there exists an open interval $J\subset \Omega$ containing the point $x$ such that $f\vert_{J}< 0$. Thus if $g\in \mathscr{C}^{k}(\Omega)$ and $g(x)<0$ for all $x\in \Omega$, then $f\vert_{J}= g\vert_{J}$. Consequently, Lemma \ref{lem_loc_interval} yields that $D(f)(x)= D(g)(x)$. Therefore, without the loss of generality
we can assume that $f<0$ on $\Omega$. Then $f=-|f|$ and if we substitute in \eqref{id_n}
\[
 f_{1}= |f| 
 \qquad 
 f_{i}= -\mathbf{1} 
 \qquad 
 \text{ for } \qquad  i\geq 2,
\]
 then  Lemma \ref{lem_loc_interval} gives us $D(-|f|)=-D(|f|)$. Thus we have 
\begin{multline*}
 D(f)(x)
 = f(x) \cdot \sum_{i=0}^{n}\sum_{\substack{|\alpha|=i\\ \alpha \in \mathbb{N}_{0}^{k+1}}} 
 c_{\alpha_{0}, \ldots, \alpha_{k}}(x)\left((\ln\circ |f|)(x)\right)^{\alpha_{0}}\cdot \left((\ln \circ |f|)'(x)\right)^{\alpha_{1}} \cdots \left((\ln \circ |f|)^{(k)}(x)\right)^{\alpha_{k}}
\end{multline*}
for all $f\in \mathscr{C}^{k}(\Omega)$ with $f(x)\neq 0$ for $x\in \Omega$. 

For $D(f)$ to be defined and continuous on $\mathscr{C}^{k}(\Omega)$, including points where $f(x)=0$, we analyze the behavior of the terms involving logarithms and their derivatives. To ensure continuity of $D(f)$ at $f(x)=0$, singular terms must cancel. We will justify this by expanding each term of the form 
$$ \left((\ln \circ |f|)^{(j)}(x)\right)^{\alpha_{j}} $$
for $j = 0, 1 , \dots , n$ using Lemma \ref{lem_ln}. Since $D(f) \in \mathscr{C}(\Omega)$, then every singular term must vanish, i.e. must be cancelled by some other term with the same order of singularity. Therefore, after regrouping, we will obtain a representation of $D(f)$ that contains only continuous multipliers of the following summands:
\begin{itemize}
	\item $f^{(k)}$ for $k=1, \dots , n$, which corresponds to the case $m_0 = \dots = m_{k-1}=0$, $m_k=1$ in Lemma \ref{lem_ln} (singularity $O(1/f^{m_1+\dots + m_k})$ is reduced with the term $f(x)$ in front of the above representation of $D(f)$ if and only if  $m_0 = \dots = m_{k-1}=0$ and $m_k=1$),
	\item $\ln \circ ( |f|)^j$, which corresponds to the case  $\alpha_0=j$, $\alpha_1=\dots =\alpha_j=0$, and no singularity is produced.
\end{itemize}

The above discussion leads to the conclusion that $D(f)$ must have the form given by equation \eqref{D}, where $c_1, \dots , c_n, d_1, \dots , d_n \in \mathscr{C}(\Omega)$. The terms with logarithms are continuous at $f=0$ due to $\lim_{x\to 0}x\ln(|x|)^{k}=0$, with the convention $0\ln(0)^k=0$.

The converse, proving that the operator $D$ defined by \eqref{D} satisfies \eqref{id_n}, involves a direct calculation using the properties of derivatives and logarithms.
\end{proof}

Our main result yields two immediate corollaries. The first characterizes linear operators that satisfy the identity~\eqref{id_n}, while the second describes operators satisfying~\eqref{id_n} that also annihilate polynomials up to a certain degree.

\begin{cor}\label{cor_linear}
 Let $k$ be a fixed nonnegative integer and let $\Omega\subset \R$ be a nonempty open set.
 Suppose the operator $D\colon \mathscr{C}^{k}(\Omega)\to \mathscr{C}(\Omega)$ is \emph{linear} and satisfies identity~\eqref{id_n}
 for all $f_{1}, \ldots, f_{n+1}\in \mathscr{C}^{k}(\Omega)$.
 Then there exist functions $c_{1}, \dots , c_n\in \mathscr{C}^{k}(\Omega)$ such that for every function $f\in \mathscr{C}^{k}(\Omega)$, operator $D$ act as a linear differential operator:
 \begin{equation}
 D(f)(x)= c_{1}(x)f'(x)+c_{2}(x)f''(x)+ \dots + c_n(x)f^{(n)}(x).
 \label{D2}
 \end{equation}
 Moreover, if $k<n$, the coefficients corresponding to derivatives of order higher than $k$ must vanish, i.e., $c_{k+1}=c_{k+2}=\dots = c_{n}=0$.

 Conversely, any operator $D$ defined by~\eqref{D2} for $f\in \mathscr{C}^{k}(\Omega)$ is linear and satisfies~\eqref{id_n}.
\end{cor}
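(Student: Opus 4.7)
The strategy is to derive Corollary \ref{cor_linear} as a direct consequence of Theorem \ref{thm_main}. Since a linear operator $D$ trivially satisfies the hypotheses of that theorem, we obtain immediately the representation \eqref{D} with continuous coefficients $c_1,\dots,c_n,d_1,\dots,d_n$, and with $c_{k+1}=\dots=c_n=0$ in case $k<n$. All that remains is to show that linearity forces $d_1=\dots=d_n=0$.

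The key step is to exploit the scaling identity $D(\lambda f)=\lambda D(f)$, valid for every scalar $\lambda$. Substituting $\lambda f$ (for $\lambda>0$) into \eqref{D} and using $\ln(\lambda f) = \ln\lambda + \ln f$ (for positive $f$), the differential part $\sum c_i f^{(i)}$ is already homogeneous of degree one; comparing the two sides and cancelling a factor of $\lambda$ reduces the resulting equation to the identity
\[
 f(x)\sum_{j=1}^n d_j(x) \bigl[(\ln\lambda + \ln f(x))^j - (\ln f(x))^j\bigr] = 0
\]
which must hold for every $\lambda>0$, every positive $f\in \mathscr{C}^k(\Omega)$, and every $x\in \Omega$. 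Fixing $x_0\in \Omega$ and choosing $f\in \mathscr{C}^k(\Omega)$ with $f(x_0)=1$ collapses this to $\sum_{j=1}^n d_j(x_0) (\ln\lambda)^j = 0$ for all $\lambda>0$. Since $\ln\lambda$ sweeps out all of $\mathbb{R}$, each coefficient of this polynomial in $\ln\lambda$ must vanish: $d_j(x_0)=0$ for every $j$, and arbitrariness of $x_0$ yields $d_j\equiv 0$ on $\Omega$.

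The converse direction is routine: any operator of the form \eqref{D2} is visibly linear, and the identity \eqref{id_n} was already verified (by induction) for arbitrary linear differential operators in the introductory discussion. The proof rests entirely on the homogeneity trick applied to the conclusion of Theorem \ref{thm_main}, so I do not foresee any substantive obstacle; the only minor point to check is the existence of $f\in \mathscr{C}^k(\Omega)$ attaining the value $1$ at a prescribed point $x_0$, which is trivial (e.g.\ the constant function $\mathbf{1}$ itself).
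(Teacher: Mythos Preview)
Your proposal is correct and is precisely the intended route: the paper states Corollary~\ref{cor_linear} as an ``immediate corollary'' of Theorem~\ref{thm_main} without supplying an explicit argument, and your homogeneity trick $D(\lambda f)=\lambda D(f)$ is the natural way to eliminate the logarithmic terms from the representation~\eqref{D}. The details you give (choosing $f=\mathbf{1}$ and letting $\ln\lambda$ range over $\mathbb{R}$) are clean and complete.
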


\begin{cor}\label{cor_annihilate} 
 Let $k, n$ and $j$ be a fixed nonnegative integers and let $\Omega\subset \R$ be a nonempty open set.
 Suppose the operator $D\colon \mathscr{C}^{k}(\Omega)\to \mathscr{C}(\Omega)$ satisfies identity~\eqref{id_n} 
 and, additionally, annihilates all polynomials of degree at most $j$. 
 Then there exist functions $c_{j+1}, \dots, c_n\in \mathscr{C}(\Omega)$ such that for all $f\in \mathscr{C}^{k}(\Omega)$:
 \begin{equation}
 D(f)(x)= c_{j+1}(x)f^{(j+1)}(x)+ c_{j+2}(x)f^{(j+2)}(x)+ \dots + c_{n}(x)f^{(n)}(x).
 \label{D3}
 \end{equation}
 Furthermore, if $k\leq j$, then $D$ is the zero operator on $\mathscr{C}^{k}(\Omega)$ and if $k<n$, then $c_{k+1}= \cdots = c_{n}=0$.

 Conversely, any operator $D\colon \mathscr{C}^{k}(\Omega)\to \mathscr{C}(\Omega)$  given by formula~\eqref{D3} satisfies identity~\eqref{id_n} and annihilates all polynomials of degree at most $j$.
\end{cor}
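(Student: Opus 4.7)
The plan is to invoke Theorem \ref{thm_main} to obtain the general representation \eqref{D} of $D$, and then use the annihilation hypothesis to strip away the superfluous terms. Concretely, I will need to show that the assumption ``$D(p) = 0$ for every polynomial $p$ of degree at most $j$'' forces $d_1 = \cdots = d_n \equiv 0$ and $c_1 = \cdots = c_j \equiv 0$ in \eqref{D}.

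To kill the logarithmic coefficients, I would test \eqref{D} on positive constant functions $f \equiv c > 0$, which are polynomials of degree zero. All derivatives vanish, so the identity reduces to
\[
c \sum_{i=1}^{n} d_i(x) (\ln c)^i = 0 \qquad (x \in \Omega,\ c > 0).
\]
Since $t = \ln c$ ranges over all of $\mathbb{R}$, viewing the left-hand side as a polynomial in $t$ with coefficients depending on $x$ forces each $d_i(x)$ to vanish identically.

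To kill $c_1, \ldots, c_j$, I would fix an arbitrary $x_0 \in \Omega$ and evaluate $D$ on the shifted monomials $p_m(x) = (x - x_0)^m$ for $1 \le m \le j$. These are polynomials of degree at most $j$, so $D(p_m)(x_0) = 0$; since $p_m^{(i)}(x_0) = m!\,\delta_{im}$, the identity collapses to $m!\, c_m(x_0) = 0$. As $x_0 \in \Omega$ is arbitrary, $c_1 \equiv \cdots \equiv c_j \equiv 0$, yielding the claimed representation \eqref{D3}.

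The remaining assertions follow at once: Theorem \ref{thm_main} already guarantees $c_{k+1} = \cdots = c_n = 0$ whenever $k < n$, and combining this with $c_1 = \cdots = c_j = 0$ collapses $D$ to the zero operator when $k \le j$. The converse is routine: any operator of the form \eqref{D3} is a linear differential operator, hence satisfies \eqref{id_n} by Corollary \ref{cor_linear}, and it visibly annihilates all polynomials of degree at most $j$, because every such polynomial has vanishing derivatives of order strictly greater than $j$. I do not foresee any real obstacle here; the whole argument is clean post-processing of the main theorem, with the only substantive step being the polynomial-in-$\ln c$ observation that wipes out the logarithmic part.
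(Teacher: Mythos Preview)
Your argument is correct and is precisely the natural way to extract the corollary from Theorem~\ref{thm_main}; the paper itself states this result as an immediate corollary and offers no separate proof, so there is nothing further to compare.
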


We now aim to extend Corollary~\ref{cor_linear}. Instead of identity~\eqref{id_n}, which involves $n+1$ function arguments, we will consider identity~\eqref{id_single}. This latter identity involves only a single function $f \in \mathscr{C}^{k}(\Omega)$, potentially offering greater applicability. To proceed, we first introduce the notion of difference operators.

Let $T\colon \mathscr{C}^{k}(\Omega)\to \mathscr{C}(\Omega)$ be an operator and let $h\in \mathscr{C}^{k}(\Omega)$ be a function. The action of the (first-order) difference operator $\Delta_{h}$ on $T$ is defined as
\[
 \Delta_{h}T(f)= T(f+h)-T(f)
 \qquad
 \left(f\in \mathscr{C}^{k}(\Omega)\right).
\]
For a positive integer $m$ and functions $h_{1}, \ldots, h_{m}\in \mathscr{C}^{k}(\Omega)$, the $m$\textsuperscript{th} order difference operator $\Delta_{h_{1}, \ldots, h_{m}}$ is defined recursively by 
\[
 \Delta_{h_{1}, h_{2}, \ldots, h_{m}}T= \Delta_{h_{1}}\left(\Delta_{h_{2}}\left(\cdots \Delta_{h_{m}}T\right)\right).
\]

With these definitions, we can now present a version of Corollary~\ref{cor_linear} that relies on the single-function condition~\eqref{id_single}.

\begin{prop}\label{prop_generalization} 
 Let $k$ be a fixed nonnegative integer and let $\Omega\subset \R$ be a nonempty open set.
 Suppose the operator $D\colon \mathscr{C}^{k}(\Omega)\to \mathscr{C}(\Omega)$ is \emph{linear} and satisfies identity~\eqref{id_single}.
 Then there exist functions $c_{1}, \dots , c_n\in \mathscr{C}^{k}(\Omega)$ such that $D$ has the form~\eqref{D2} for all $f\in \mathscr{C}^{k}(\Omega)$.
 Furthermore, if $k<n$, then $c_{k+1}=c_{k+2}=\dots = c_{n}=0$.
\end{prop}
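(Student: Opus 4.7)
The strategy is to reduce the single-function identity \eqref{id_single} to the multilinear identity \eqref{id_n} by a polarization argument that exploits the linearity of $D$, and then to invoke Corollary~\ref{cor_linear} to obtain the desired form. As an initial observation, substituting $f=\mathbf{1}$ in \eqref{id_single} yields $\bigl(\sum_{i=0}^{n}(-1)^{i}\binom{n+1}{i}\bigr)D(\mathbf{1})=0$, and since the alternating sum equals $(-1)^{n}$, we conclude $D(\mathbf{1})=0$; this lets us harmlessly extend the sum in \eqref{id_single} to include the term $i=n+1$, which streamlines the polarization step.

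The main work is the polarization. Fix arbitrary functions $f_{1},\dots,f_{n+1}\in\mathscr{C}^{k}(\Omega)$ and formal parameters $t_{1},\dots,t_{n+1}\in\mathbb{R}$, and substitute $f=\sum_{j=1}^{n+1}t_{j}f_{j}$ into \eqref{id_single}. Expanding $f^{i}$ and $f^{n+1-i}$ by the multinomial theorem and using the linearity of $D$ to commute the sum past $D$, the identity becomes a polynomial equation in $(t_{1},\dots,t_{n+1})$ that vanishes identically, so every coefficient must vanish. I would read off the coefficient of the monomial $t_{1}t_{2}\cdots t_{n+1}$: the constraint $\alpha+\beta=(1,\dots,1)$ on the index multiplicities forces $(\alpha,\beta)=(\mathbf{1}_{I},\mathbf{1}_{I^{c}})$ for some $I\subset\{1,\dots,n+1\}$ with $|I|=i$, and the accompanying multinomial factors collapse to the uniform constant $\binom{n+1}{i}\,i!\,(n+1-i)!=(n+1)!$, independent of $i$. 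Hence that coefficient equals $(n+1)!$ times the left-hand side of \eqref{id_n}, and its vanishing delivers \eqref{id_n} for the arbitrarily chosen $f_{1},\dots,f_{n+1}$.

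With \eqref{id_n} established, Corollary~\ref{cor_linear} directly gives the representation \eqref{D2} together with the vanishing $c_{k+1}=\dots=c_{n}=0$ when $k<n$, completing the argument. The only delicate point is the combinatorial bookkeeping in the polarization step, but it is rather tight: the constraint $|\alpha|+|\beta|=n+1$ combined with the requirement that each $t_{j}$ appear to the first power leaves no flexibility, and the uniform simplification of the multinomial factors across all $i$ is a standard polarization phenomenon. The same reduction could equivalently be phrased through the iterated difference operators $\Delta_{h_{1},\dots,h_{n+1}}$ introduced just before the proposition, but the direct substitution of a parametric linear combination is the most transparent route.
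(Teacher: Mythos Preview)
Your proof is correct and follows essentially the same strategy as the paper's: reduce \eqref{id_single} to \eqref{id_n} by polarization, then invoke Corollary~\ref{cor_linear}. The only difference is that the paper packages the left-hand side of \eqref{id_n} as a symmetric multi-additive operator $\mathcal{A}_{n+1}$, observes that \eqref{id_single} says its diagonalization vanishes, and cites an abstract polarization lemma to conclude $\mathcal{A}_{n+1}\equiv 0$, whereas you carry out the polarization by hand via the substitution $f=\sum_j t_j f_j$ and the multinomial expansion---the same argument at a different level of abstraction.
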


\begin{proof}
 The strategy is to demonstrate that if a linear operator $D\colon \mathscr{C}^{k}(\Omega)\to \mathscr{C}(\Omega)$ satisfies equation~\eqref{id_single}, it must also satisfy equation~\eqref{id_n}. The conclusion will then follow directly from Corollary~\ref{cor_linear}.

 Consider the multi-additive, symmetric operator $\mathcal{A}_{n+1}\colon \prod_{i=1}^{n+1}\mathscr{C}^{k}(\Omega)\to \mathscr{C}(\Omega)$ defined by
 \begin{multline*}
  \mathcal{A}_{n+1}(f_1, f_2, \dots , f_{n+1})= \\
   \sum_{i=0}^{n}(-1)^{i}\sum_{\substack{I \subset \{1, \dots, n+1\} \\ \mathrm{card}(I)=i}}\left(\prod_{j\in I}f_{j}\right)D\left(\prod_{k\in \left\{1, \ldots, n+1 \right\}\setminus I}f_{k}\right)
  \qquad
  \left(f_1, \dots , f_{n+1}\in \mathscr{C}^{k}(\Omega)\right).
 \end{multline*}
 Let $\mathcal{A}^\ast_{n+1}(f) = \mathcal{A}_{n+1}(f, f,\dots , f)$ denote the diagonalization of $\mathcal{A}_{n+1}$. The condition that $D$ satisfies equation~\eqref{id_single} is equivalent to stating that $\mathcal{A}^\ast_{n+1}(f) = 0$ for all $f \in \mathscr{C}^{k}(\Omega)$:
 \[
 \mathcal{A}^\ast_{n+1}(f) =
  \sum_{i=0}^{n}(-1)^{i}\binom{n+1}{i}f^{i} D\left(f^{n+1-i}\right)=0
 \qquad \forall f\in \mathscr{C}^{k}(\Omega).
 \]
 The polarization formula (see, e.g., \cite[Lemma 1.4]{Sze91}) asserts that for a symmetric and multi-additive operator, the diagonalization is identically zero if and only if the operator itself is identically zero. Thus, $\mathcal{A}^\ast_{n+1} \equiv 0$ implies $\mathcal{A}_{n+1} \equiv 0$.
 Specifically, $\mathcal{A}_{n+1}(f_1, \dots, f_{n+1}) = 0$ for all $f_1, \dots, f_{n+1} \in \mathscr{C}^{k}(\Omega)$. This is precisely the statement that the linear operator $D$ satisfies identity~\eqref{id_n}.
 Applying Corollary~\ref{cor_linear} yields the desired form~\eqref{D2} for $D$, completing the proof.
\end{proof}

\bibliographystyle{plain}
\bibliography{GseFec24b_bib}

\end{document}